\documentclass{amsart}

\usepackage[utf8]{inputenc}

\usepackage[]{amsmath}
\usepackage{amsthm}
\usepackage{latexsym}
\usepackage{amssymb}
\usepackage{mathrsfs}
\usepackage{exscale}
\usepackage{textcomp}
\usepackage[all,ps,tips,tpic]{xy}
\usepackage{upgreek}
\usepackage{url}
\usepackage{booktabs}
\usepackage[final,pdftex,colorlinks=false,pdfborder={0 0 0}]{hyperref}
\usepackage{aliascnt}

\usepackage{tikz}
\usetikzlibrary{arrows,calc,matrix,patterns}

\newcommand{\Rat}{\mathbb{Q}}
\newcommand{\Nat}{\mathbb{N}}
\newcommand{\Int}{\mathbb{Z}}

\newcommand{\Top}{\mathit{Top}}

\newcommand{\defeq}{\mathrel{\mathop:}=}

\newcommand{\LST}[1]{\mathbf{\underline{#1}}}

\numberwithin{thmcounter}{section}
\newaliascnt{thmauto}{thmcounter}

\newaliascnt{defauto}{thmcounter}

\newaliascnt{exauto}{thmcounter}

\newaliascnt{lemauto}{thmcounter}

\newaliascnt{propauto}{thmcounter}

\newaliascnt{corauto}{thmcounter}

\newaliascnt{remauto}{thmcounter}

\theoremstyle{plain}
\newtheorem{thm}[thmauto]{Theorem}
\newtheorem*{meta}{Meta Question}

\newtheorem{prop}[propauto]{Proposition}
\newtheorem{cor}[corauto]{Corollary}
\newtheorem{thmA}{Theorem}

\theoremstyle{definition}
\newtheorem{definition}[defauto]{Definition}

\theoremstyle{remark}
\newtheorem{rem}[remauto]{Remark}
\newtheorem*{rem*}{Remark}

\let\originalleft\left
\let\originalright\right
\renewcommand{\left}{\mathopen{}\mathclose\bgroup\originalleft}
\renewcommand{\right}{\aftergroup\egroup\originalright}

\DeclareMathOperator{\Conf}{Conf}

\DeclareMathOperator{\Star}{Star}
\DeclareMathOperator{\HH}{H}
\DeclareMathOperator{\map}{map}
\DeclareMathOperator{\FI}{FI}

\usepackage{microtype}
\usepackage[draft]{fixme}
\usepackage{soul}

\usepackage{tikz}
\usepackage{pgfplots}
\usepgfplotslibrary{polar}

\definecolor{light-gray}{gray}{0.75}

\newcommand{\particleNr}[2]{
  \begin{scope}[shift={#1}]
    \node[fill=white,draw,circle,inner sep=1pt] at (0,0) {\small #2};
  \end{scope}
}

\title{Representation Stability for Configuration Spaces of Graphs}
\author{Daniel Lütgehetmann}
\address{Institut f\"ur Mathematik, Freie Universit\"at Berlin, Germany}
\email{daniel.luetgehetmann@fu-berlin.de}
\date{May 2019}

\begin{document}
\begin{abstract}
  We consider for two based graphs $G$ and $K$ the sequence of graphs $G_k$ given by the wedge sum
  of $G$ and $k$ copies of $K$.
  These graphs have an action of the symmetric group $\Sigma_k$ by permuting the $K$-summands.
  We show that the sequence of representations of the symmetric group $H_q(\Conf_n(G_\bullet);
  \Rat)$, the homology of the ordered configuration space of these spaces, is representation stable
  in the sense of Church and Farb.
  In the case where $G$ and $K$ are trees (with loops), we provide a similar result for glueing
  along arbitrary subtrees instead of the base point.
  Furthermore, we show that stabilization alway holds for $q=1$.
\end{abstract}
\maketitle

\section{Introduction}
For a topological space $X$ and a finite set $S$ we define the \emph{ordered configuration space of
$X$ with particles $S$} as
\[ \Conf_S(X) \defeq \left\{f\colon S\to X \text{ injective} \right\} \subset \map(S, X). \]
For $n\in\Nat$ we write $\LST{n}\defeq \{1, 2, \ldots, n\}$ and $\Conf_n(X)\defeq
\Conf_{\LST{n}}(X)$.

Let $G$ be a finite connected graph (by which we mean a 1-dimensional
CW-complex), then we are interested in the homology of $\Conf_n(G)$, the
\emph{ordered configuration space of $n$ particles in $G$}.
In \cite{Luetgehetmann14} we showed that at least one of the
$H^k(\Conf_\bullet(G);\Rat)$ for $k\ge 0$ cannot be representation stable.
In this paper we show that by stabilizing the \emph{graph} instead of the number
of particles we get representation stability.

Let $G_0$ be a finite graph and let $K_i\subset G_i$ for $1\le i\le \ell$ be pairs of finite graphs
such that each $K_i$ is also a subgraph of $G_0$.
Denote by $\underline{\Gamma}=\underline{\Gamma}_{G_0} \defeq \{(K_1, G_1), \ldots, (K_\ell,
  G_{\ell})\}$.
  Let $\mathbf{G}=\mathbf{G}_{\underline{\Gamma}}\colon \FI^{\times \ell} \to \Top$ be given
  by
  \[
    \mathbf{G}_{\underline{\Gamma}}(\LST{j_1}, \ldots, \LST{j_\ell}) \defeq
    G_0 \sqcup_{K_1} G_1^{\sqcup j_1} \cdots \sqcup_{K_\ell} G_\ell^{\sqcup j_\ell},
  \]
  i.e.\ by gluing the copies of the graphs $G_i$ to $G_0$ via the shared subgraph $K_i$.
  To define the images of morphisms, notice that each summand $G_i$ can be labeled by a number
  between 1 and $j_i$.
  For a map $\phi\colon\LST{j_i}\hookrightarrow\LST{j'_i}$ we define the induced map to send the
  summand with label $m\in\LST{j_i}$ to the summand with label $\phi(m)$ via the identity.

  \begin{meta}
    For which $G_i$, $K_i$, $q$ and abelian group $A$ is the
    $A[\FI^{\times\ell}]$-module
    \[
      \mathbf{H}^{A}_{q,n}\underline{\Gamma} \defeq
      H_q(\Conf_n(\mathbf{G}_{\underline{\Gamma}}); A)
    \]
    finitely generated?
  \end{meta}

  For $A=\Int$ we also write $\mathbf{H}^{\underline\Gamma}_{q,n}$.
  Part of this question asks how ``local'' the homology of configuration spaces
  of graphs is.
  In the case of trees with loops (i.e.\ graphs that can be constructed by
  glueing copies of $S^1$ to a finite tree) we proved a rather strong kind of
  locality in \cite{CheLue16} by describing an explicit generating system of
  products of 1-classes, and by refining the generating system we can prove in
  that case that the answer to the question above includes all trees with loops.

  \begin{thmA}\label{thm:tree-stabilization}
    If each of the graphs $G_i$ for $0\le i\le \ell$ is a tree with loops, then
    $\mathbf{H}^{\underline{\Gamma}}_{q,n}$ is finitely generated in degree $(\zeta, \zeta, \ldots,
    \zeta)$ for each $q,n\in\Nat$, where $\zeta = \zeta_{n,q} = \min\{2n, n+3q\}$.
  \end{thmA}

  For general graphs, we restrict ourselves to the first homology and recover an analogous statement:
  \begin{thmA}\label{thm:first-homology-group}
    For any choice of graphs $G_i$ and $K_i$ the $\FI$-module $\mathbf{H}^{\underline{\Gamma}}_{1,n}$ is
    finitely generated in degree $(n+3, n+3, \ldots, n+3)$ for each $n\in\Nat$.
  \end{thmA}

  Finally, wedging arbitrary graphs along a single vertex also gives
  representation stability.

  \begin{thmA}\label{thm:wedge-stabilization}
    If each graph $K_i$ is a single vertex $v\in G_0$ then
    $\mathbf{H}^{\underline{\Gamma}}_{q,n}$ is finitely generated in degree
    $(n+3, n+3, \ldots, n+3)$ for each $q,n\in\Nat$.
  \end{thmA}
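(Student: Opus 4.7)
The plan is to exploit a combinatorial chain-complex model for $H_*(\Conf_n(-);\Int)$ and use the wedge structure to bound how many copies of each $G_i$ a cycle can ``see''. A natural model is \'Swi\k{a}tkowski's chain complex (or Abrams' discretised configuration space after sufficient edge-subdivision): for a finite graph $G$, $H_q(\Conf_n(G);\Int)$ is computed by a chain complex whose basis elements are certain combinatorial configurations of $n$ particles at vertices and edges of $G$, decorated with half-edges at essential (i.e.\ branching) vertices.

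First, I would recall this combinatorial model and describe its shape for $\mathbf{G}[\underline{H},\underline{G}](\mathbf{j_1},\ldots,\mathbf{j_\ell})$. Since each $H_i$ is a single point, the wedge point is the only place where different copies of the summands $G_i$ meet, and away from it the graph splits as a disjoint union. Hence each basis element of the \'Swi\k{a}tkowski complex in degree $q$ for $n$ particles decomposes uniquely according to which copy of $G_i$ (or of $G_0$) each particle and each half-edge decoration belongs to.

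Second, I would establish the key combinatorial bound: a single basis element involves at most $3n$ copies of any fixed $G_i$. The factor $3$ reflects that a particle contributes at most $3$ to the count of accessed copies, namely the copy it physically sits in (if it lies in the interior of some $G_i$) together with at most two further copies accessed through half-edge decorations at the wedge point. Summing over the $n$ particles gives the stated $3n$. The \'Swi\k{a}tkowski differential preserves this support bound, so it descends to the level of cycles and homology.

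Third, I would translate this into the language of $\FI^{\times\ell}$-modules from \autoref{sec:fi-modules}: given a class in $\mathbf{H}^\Int_{q,n}[\underline{H},\underline{G}](\mathbf{j_1},\ldots,\mathbf{j_\ell})$, choose a representing cycle, let $S_i\subset\mathbf{j_i}$ be the set of indices of the at most $3n$ copies of $G_i$ it touches, and observe that the cycle lies in the image of the structure map induced by the inclusion $S_i\hookrightarrow\mathbf{j_i}$. This places the class in the span of generators indexed by tuples $(\mathbf{k_1},\ldots,\mathbf{k_\ell})$ with each $k_i\le 3n$, proving finite generation in multidegree $(3n,\ldots,3n)$.

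The main obstacle is the combinatorial bookkeeping in step two: one must set up \'Swi\k{a}tkowski's model carefully enough to make the support of a cell well-defined, justify the constant $3$ for how many copies a particle can access through the wedge point, and verify that enough edge-subdivision of $G_0$ at the wedge point does not inflate this bound. A secondary task is checking $\FI^{\times\ell}$-equivariance of the chain-level model so that the support argument is compatible with the structure maps.
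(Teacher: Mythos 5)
The key step in your argument has a genuine gap. In step two you bound the support of a \emph{single basis element} of the \'Swi\k{a}tkowski (or Abrams) complex; it is correct that a basis cube in degree $q$ involves at most about $n+q$ copies of each $G_i$, since each of the $n$ particles sits in one place and each of the $q$ half-edge decorations points into one edge. But a \emph{cycle} is a finite $\Int$-linear combination of basis elements, and different summands can have completely different, and in particular overlapping but non-identical, supports. Nothing in your step two controls the union of those supports, which is what ``the copies of $G_i$ the cycle touches'' in step three refers to. Concretely, already for $G_0$ a point and $G_1=[0,1]$, the combinatorial model of $\Conf_n(\Star_k)$ produces 1-cycles whose representing chain touches all $k$ edges; that such cycles are \emph{homologous} to sums of cycles supported on few edges is exactly the nontrivial claim of \autoref{prop:thm-for-star}, and it requires a genuine geometric argument (decomposing $\sigma$ along segments and rerouting particles back through a fixed $\Star_3$). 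So the passage from ``each basis cell has small support'' to ``each homology class has a small-support representative'' is the entire content of the theorem, and your proposal assumes it rather than proving it.

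There is also a second, smaller gap: the factor $3$ is attributed to each particle ``accessing two further copies through half-edge decorations at the wedge point,'' but half-edges are counted by the homological degree $q$, not by $n$, so the heuristic does not give $3n$ per se. In the paper the constant $3$ arises from an entirely different source: one proves the tree case with bound $2n$ using the generating set of products of basic cycles from \cite{CheLue16} plus \autoref{prop:thm-for-star}, then shows via the leaf-gluing description of \autoref{sec:glueing-two-leaves} that each gluing step may introduce at most one extra ``parking'' copy $G_i^p$ per particle per summand, giving $2n+n=3n$. The essential tool is the explicit retraction $\phi^S$ that pushes all movement of particles off a controlled set of wedge summands onto fixed edges $e_i^p$, which has no analogue in your outline. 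To repair your argument you would need, at minimum, a homology-decomposition statement in the combinatorial model saying that cycles can be rewritten modulo boundaries as sums of small-support cycles; obtaining such a statement is where the real work lies.
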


  \begin{rem}
    If at least one of the $K_i$ has valence at least three inside $G_i$, then the proof of
    \autoref{thm:wedge-stabilization} actually shows that the $\FI$-module is generated in degree
    ${(n+1, \ldots, n+1)}$.
  \end{rem}

  \begin{cor}
    In the same situation as in the theorems above, choose any non-decreasing (component-wise) functor
    \[ F\colon \FI\to \FI^{\times\ell}, \]
    then the $\FI$-module $\mathbf{H}^{\underline{\Gamma}}_{q,n}\circ F$ is
    finitely generated.
    In particular, the sequence
    \[ H_q(\Conf_n(\mathbf{G}_{\underline{\Gamma}}\circ F); \Rat)\]
    is representation stable and therefore the dimension of the sequence of vector spaces is
    eventually polynomial.
  \end{cor}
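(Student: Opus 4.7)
The plan is to combine \autoref{thm:wedge-stabilization} and \autoref{thm:tree-stabilization} with two standard principles from the theory of $\FI$-modules: pullback along a componentwise non-decreasing functor preserves finite generation, and finitely generated $\FI$-modules over $\Rat$ are representation stable with eventually polynomial dimension (this last being the theorem of Church--Ellenberg--Farb).

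Setting $M \defeq \mathbf{H}^\Int_{q,n}[\underline{H}, \underline{G}]$, the two theorems give that $M$ is a finitely generated $\FI^{\times\ell}$-module, with generation bound $(D, \ldots, D)$ where $D \in \{3n, 2n\}$ depending on which hypothesis applies. To show that $M \circ F$ is finitely generated as an $\FI$-module, I would fix a finite generating set of $M$, each generator lying in some $M(\mathbf{a_1^\alpha}, \ldots, \mathbf{a_\ell^\alpha})$ with all $a_i^\alpha \leq D$. Since each component $F_i\colon \FI\to\FI$ is non-decreasing, there exists $N$ such that $|F_i(\mathbf{k})|\geq D$ for all $k\geq N$ and $1\leq i\leq \ell$. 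For $k\geq N$, every element of $(M\circ F)(\mathbf{k}) = M(F(\mathbf{k}))$ is a linear combination of pushforwards of the generators along morphisms in $\FI^{\times\ell}$. A support argument, using that every element of $F_i(\mathbf{k})$ admits a minimal support subset of $\mathbf{k}$, then shows that each such morphism factors through $F(\mathbf{k'})\subseteq F(\mathbf{k})$ for some $k'$ bounded uniformly in $D$ and $F$, exhibiting $M\circ F$ as a finitely generated $\FI$-module.

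The final step is then formal: tensor with $\Rat$, and invoke Church--Ellenberg--Farb to deduce representation stability and eventual polynomial growth of the dimension sequence $\dim_\Rat \mathbf{H}^\Rat_{q,n}(\mathbf{k})$.

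The main obstacle is the factorization step in the second paragraph: one must verify that every morphism in $\FI^{\times\ell}$ with target $F(\mathbf{k})$ and small domain factors through $F(\mathbf{k'})$ for some uniformly bounded $k'$. This amounts to a finitary condition on $F$ that should follow from the non-decreasing hypothesis combined with standard properties of functors out of $\FI$; pinning down exactly how the bound on $k'$ depends on $D$ and $F$ is the real content of the argument.
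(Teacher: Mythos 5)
Your approach matches the paper's, which deduces the corollary from \autoref{thm:wedge-stabilization} and \autoref{thm:tree-stabilization}, the remark at the end of \autoref{sec:fi-modules} that pullback along a non-decreasing $F$ preserves finite generation, and the two Church--Ellenberg--Farb theorems quoted there. If anything you are more careful than the paper in flagging the factorization step as the ``real content'': the paper's entire justification of the pullback claim is the one-liner that each component of $F$ is eventually constant or unbounded, and it leaves implicit the bounded-support property of $F$ (that every size-$D$ subset of $F_i(\mathbf{k})$ lies in the image of $F_i(\mathbf{k'})$ for $k'$ bounded uniformly in $D$) which your sketch correctly isolates as what must be verified, and which holds for the functors one would actually substitute --- identities, constants, shifts, and finite products thereof.
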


  \begin{cor}\label{cor:thm-for-wedge}
    Let $G, K$ be finite graphs with base point and define
    \[ G_k \defeq G\vee \underbrace{K\vee\cdots\vee K}_{\text{$k$ times}}.\]
    Then the $\FI$-module $H_q(\Conf_n(G_\bullet))$ is finitely generated in degree $n+3$.
    In particular, the sequence $H_q(\Conf_n(G_\bullet);\Rat)$ is representation stable and therefore
    the dimension of the sequence of vector spaces is eventually polynomial in $k$.
  \end{cor}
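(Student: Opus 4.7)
This corollary is the special case $\ell=1$ of \autoref{thm:wedge-stabilization} combined with the preceding corollary. My plan is the following.

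First, I would set up the identification with the framework of \autoref{thm:wedge-stabilization}. Take $\ell=1$, let $G_0\defeq G$, $G_1\defeq H$, and let $H_1\subset G_0, G_1$ be the common basepoint, which is a single point and hence a subgraph of both. Unwinding the definition of $\mathbf{G}[\underline{H},\underline{G}]$, one has
\[
  \mathbf{G}[\underline{H},\underline{G}](\mathbf{k}) = G_0 \sqcup_{H_1} G_1^{\sqcup k} = G \sqcup_{\pt} H^{\sqcup k},
\]
and since gluing along a single point is exactly the wedge of based spaces, this is canonically homeomorphic to $G_k = G\vee H^{\vee k}$. The $\Sigma_k$-actions by permuting $H$-summands agree under this identification, so the functors $\FI\to\Top$ agree as well.

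Second, since $H_1$ is a single point, the hypothesis of \autoref{thm:wedge-stabilization} is satisfied. With $\ell=1$ we have $\FI^{\times\ell}=\FI$, so the theorem directly yields that
\[
  \mathbf{H}^\Int_{q,n}[\underline{H},\underline{G}] = H_q(\Conf_n(G_\bullet);\Int)
\]
is finitely generated in degree $3n$ as an $\FI$-module. (If one wishes to invoke the preceding corollary literally, take $F=\id_\FI\colon\FI\to\FI^{\times 1}=\FI$, which is trivially non-decreasing.)

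Finally, tensoring the finitely generated $\FI$-module over $\Int$ with $\Rat$ preserves finite generation and yields a finitely generated $\FI$-module over $\Rat$. By the results of Church--Ellenberg--Farb, such $\FI$-modules are representation stable and their dimensions are eventually polynomial in $k$; this is precisely the content of the preceding corollary applied to this $F$. I do not expect any serious obstacle: the only substantive verification is the identification of the iterated gluing along a point with the iterated wedge, which is immediate from the definitions.
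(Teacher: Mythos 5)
Your proposal is correct and matches the paper's (implicit) proof: \autoref{cor:thm-for-wedge} is indeed the $\ell=1$ specialization of \autoref{thm:wedge-stabilization}, with the observation that gluing along a point is a wedge and that $\FI^{\times 1}=\FI$, followed by the standard passage from a finitely generated $\Int[\FI]$-module to a representation stable sequence over $\Rat$ via Church--Ellenberg--Farb.
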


  \begin{rem}
    The fact that the dimension of this sequence is \emph{bounded from above} by a polynomial can be
    seen more easily if $G$ and $K$ are trees:
    from \cite[Theorem 2.6, p.  8]{Ghrist01} we know that the rank of the first homology of star
    graphs is polynomial in the number of edges.
    This implies that the size of the generating set for $H_q(\Conf_n(G_k))$ described in
    \cite[Theorem 2, p.  2]{CheLue16} is polynomial in $k$, giving a polynomial upper bound for the
    rank. More recently, it was proved that for a very general class of graph
    stabilizations the corresponding sequence of homology of configuration
    spaces is representation stable (although without computing the explicit
    degree), see \cite[Theorem G, p. 4]{RaWh17}.
  \end{rem}

  \begin{rem}
    In a previous version of this paper, we studied stabilization of graphs
    along an interval or a circle.
    The proofs used results from \cite{CheLue16}, which had to be corrected.
    Without those results, it is much more tedious to compute the exact degree
    of stability, which is why we do not treat those types of stabilization in
    this paper as the statements were merely meant as a showcase on how to use
    the proof techniques for related problems.
    It is, however, easy to see that the homology of configuration spaces of
    such stabilizing graphs are representation stable of \emph{some} (unknown)
    degree, for a straightforward proof see e.g.\ \cite[Theorem G, p.
    4]{RaWh17}.
  \end{rem}

  \subsection{Acknowledgements}
  The author was supported by the Berlin Mathematical School and the SFB 647 “Space – Time –
  Matter” at Berlin.
  I would like to thank Peter Patzt and Elmar Vogt for fruitful discussions.

  \section{Representation stability and \texorpdfstring{$\FI^{\times\ell}$}{FI\textasciicircum
  l}-modules}
  \label{sec:fi-modules}
  In \cite{CF13}, Church and Farb introduced the concept of \emph{representation stability}.
  We now recall the concept in the case of the symmetric group $\Sigma_k$, for more details see
  \cite[Section 2.3, p. 19]{CF13}.

  Let $\{V_k\}_{k\in\Nat}$ be a sequence of $\Sigma_k$-representations over $\Rat$ with linear maps
  \[ \phi_k\colon V_k\to V_{k+1}\]
  which are homomorphisms of $\Rat\Sigma_k$-modules.
  Here we consider $V_{k+1}$ as $\Rat\Sigma_k$ module by the standard inclusion
  $\Sigma_k\hookrightarrow \Sigma_{k+1}$.

  To describe stability for such a sequence, we need to compare $\Sigma_k$-representations to
  $\Sigma_{k'}$-representations for $k' > k$.
  Recall that the irreducible representations of $\Sigma_k$ over the rational numbers are in one to
  one correspondence to partitions $\lambda$ of $k$.
  Given a partition $\lambda=(\lambda_1\ge \lambda_2\ge\cdots\ge \lambda_\ell\ge 0)$ of $k$, define
  for $k'-k\ge \lambda_1$ the irreducible $\Sigma_{k'}$-representation $V(\lambda)_{k'}$ to be the one
  corresponding to the partition $(k'-k, \lambda_1, \ldots, \lambda_\ell)$.
  Each irreducible representation can be written like this for a unique partition $\lambda$.
  For more details, see \cite[Section 2.1, p. 14]{CF13} and \cite{FH91}.

  \begin{definition}[{\cite[Definition 2.3, p.20]{CF13}}]
    The sequence $\{V_k\}$ is \emph{(uniformly) representation stable} if, for sufficiently large $k$,
    each of the following conditions holds.
    \begin{itemize}
      \item $\phi_k\colon V_k\to V_{k+1}$ is injective.
      \item The $\Rat\Sigma_{k+1}$ submodule generated by $\phi_k(V_k)$ is equal to $V_{k+1}$.
      \item Decompose each $V_k$ into irreducible representations
        \[ V_k = \bigoplus_\lambda c_{\lambda,k} V(\lambda)_k \]
        with multiplicities $0\le c_{\lambda,k}\le\infty$.
        Then there exists an $N\ge0$ such that for each $\lambda$, the multiplicity $c_{\lambda,k}$ is
        independent of $k\ge N$.
    \end{itemize}
  \end{definition}

  This reduces the description of the infinite sequence of $\Sigma_k$-representations to a finite
  calculation.

  \vspace{1em}

  In \cite{CEF15}, Church-Ellenberg-Farb introduced the notion of $\FI$-modules, which we now recall.
  Let $\FI$ be the category with objects all finite sets and morphisms all injective maps.
  We often consider the skeleton of this category given by the restriction to the finite sets
  $\LST{n}\defeq\{1,\ldots, n\}$ for $n\ge 0$.
  \begin{definition}
    Let $R$ be a commutative ring.
    An $R[\FI]$-module $V_\bullet$ is a functor
    \[ V_\bullet \colon \FI \to \mathrm{RMod}.\]
    It is said to be \emph{finitely generated in degree $\ell$} if there exists a finite set $X$ of
    elements in
    \[ \bigsqcup_{\substack{S\in\FI\\ |S|\le\ell}} V_S, \]
    such that the smallest sub-$\FI$-module containing all these elements is $V_\bullet$.
    Here, $|S|$ is the cardinality of $S$.
  \end{definition}
  For sequences of finite dimensional representations, the notion of a finitely generated $\FI$-module
  is a generalization of representation stable sequences by the following result:
  \begin{thm}[{\cite[Theorem 1.13, p. 8]{CEF15}}]
    An $\FI$-module $V_\bullet$ over a field of characteristic 0 is finitely generated if and only if
    the sequence $k\mapsto V_{\LST{k}}$ is representation stable and each $V_{\LST{k}}$ is
    finite dimensional.
  \end{thm}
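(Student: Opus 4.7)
The statement is an ``if and only if,'' and I would tackle the two implications with rather different methods. The easier direction is that representation stability together with pointwise finite-dimensionality implies finite generation; the more substantial direction is the converse, which rests on understanding free $\FI$-modules and exploiting characteristic zero.

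For the ``if'' direction, let $N$ be a stabilization threshold from the definition of representation stability. The surjectivity axiom says that for $k\ge N$ the $\Rat\Sigma_{k+1}$-span of $\phi_k(V_{\mathbf{k}})$ is all of $V_{\mathbf{k+1}}$. Every $\FI$-morphism $\mathbf{k}\hookrightarrow \mathbf{k+1}$ factors as the standard inclusion followed by a permutation in $\Sigma_{k+1}$, so this is precisely the statement that $V_{\mathbf{k+1}}$ is generated, as an $\FI$-module, by the images of all morphisms out of $V_{\mathbf{k}}$. Iterating, $V_\bullet$ is generated as an $\FI$-module by any spanning set of the finite-dimensional truncation $\bigoplus_{k\le N}V_{\mathbf{k}}$, hence finitely generated.

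For the ``only if'' direction, I would reduce to analyzing the free (representable) $\FI$-modules
\[ M(\mathbf{m})_{S}\defeq \Rat\bigl[\Hom_{\FI}(\mathbf{m}, S)\bigr], \]
since every finitely generated $\FI$-module is by construction a quotient of a finite direct sum $\bigoplus_{i=1}^r M(\mathbf{m}_i)$. Pointwise finite-dimensionality then follows because each $\Hom_{\FI}(\mathbf{m},\mathbf{k})$ is finite. The decomposition of $M(\mathbf{m})_{\mathbf{k}}$ as a $\Sigma_k$-representation is computed by recognizing it as the induced representation $\operatorname{Ind}_{\Sigma_{k-m}}^{\Sigma_k}\Rat$; writing the regular representation of $\Sigma_m$ as $\bigoplus_{\nu\vdash m}(\dim V_\nu)V_\nu$ and then applying Pieri's rule exhibits $M(\mathbf{m})_{\mathbf{k}}$ as a sum of irreducibles of the form $V(\lambda)_k$ with $\lambda$ a partition satisfying $|\lambda|\le m$, whose multiplicities stabilize once $k-m\ge\lambda_1$. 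This establishes representation stability for each $M(\mathbf{m})$.

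The main obstacle is transferring representation stability from $\bigoplus_i M(\mathbf{m}_i)$ to an arbitrary quotient. Here characteristic zero is essential: the semisimplicity of $\Rat\Sigma_k$ for every $k$ lets one pick out isotypic components functorially in the $\FI$-variable. I would show that the multiplicity of $V(\lambda)_k$ inside $V_{\mathbf{k}}$ is controlled by a finite-dimensional space of $\FI$-module maps out of a representable summand, and hence stabilizes; injectivity of $\phi_k$ for large $k$ then follows from the combination of uniform surjectivity and stable multiplicities, since a representation that is surjectively generated from a stable source cannot strictly drop in each graded piece forever. Combining the multiplicity stability, injectivity, and generation conditions yields representation stability for the quotient and completes the argument.
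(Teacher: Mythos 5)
The paper does not prove this statement: it is quoted from Church--Ellenberg--Farb \cite[Theorem 1.13]{CEF15} and used as a black box, so there is no in-paper argument to compare yours against. Judged on its own terms, your ``if'' direction is correct and essentially complete (eventual surjectivity plus pointwise finite dimensionality yields finitely many generators in degrees $\le N$), and your analysis of the representable modules is the standard, correct first step of the converse: the identification $M(\mathbf{m})_{\mathbf{k}}\cong\operatorname{Ind}_{\Sigma_{k-m}}^{\Sigma_k}\Rat$ and the Pieri-rule computation of its stable decomposition are both right.

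The gap sits exactly where you flag ``the main obstacle,'' and your proposed resolution does not close it. Realizing $V_\bullet$ as a quotient of $M\defeq\bigoplus_i M(\mathbf{m_i})$ bounds the multiplicities $c_{\lambda,k}$ \emph{from above} by those of $M$, but gives no reason for them to become eventually constant; and the injectivity heuristic (``cannot strictly drop in each graded piece forever'') is not a proof, since $\dim V_{\mathbf{k+1}}$ typically grows much faster than $\dim V_{\mathbf{k}}$, so $\phi_k$ could have nonzero kernel for every $k$ without violating any dimension count or the surjectivity condition. What is actually required is control of the kernel of the presentation $M\twoheadrightarrow V_\bullet$ (i.e., finite presentability) and of the torsion submodule of $V_\bullet$; both are submodules of finitely generated $\FI$-modules, and the assertion that they are themselves finitely generated is precisely the Noetherian property of $\FI$-modules over a field of characteristic zero (Theorem 1.3 of \cite{CEF15}). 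That theorem, together with the resulting finite stability degree, is the substantive content of the hard direction. Your sketch never engages with relations or kernels, so as written both the eventual injectivity of $\phi_k$ and the eventual constancy of the multiplicities remain unproved.
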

  This result reduces the uniform decomposition of the representations $V_{\LST{k}}$ to finding a
  finite set of generators.
  Furthermore, Church-Ellenberg-Farb proved that the dimension of representation stable sequences
  grows polynomially:
  \begin{thm}[{\cite[Theorem 1.5, p. 4]{CEF15}}]
    Let $V_\bullet$ be an $\FI$-module over a field of characteristic 0.
    If $V_\bullet$ is finitely generated then the sequence of characters $\chi_{V_{\bullet}}$ is
    eventually polynomial. In particular, $\dim V_{\LST{k}}$ is eventually polynomial in $k$.
  \end{thm}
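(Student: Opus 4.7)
The plan is to reduce from a general finitely generated $\FI$-module to the ``representable'' modules $M(W)_{\mathbf{n}} \defeq \Rat[\Hom_{\FI}(W, \mathbf{n})]$, whose characters can be computed by hand, and then propagate this to an arbitrary finitely generated $V_\bullet$ via a short exact sequence together with an induction on a suitable invariant.

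First I would compute the $\Sigma_n$-character of $M(W)$ directly. The $\Sigma_n$-action permutes injections $W \to \mathbf{n}$, so the trace of $\sigma$ equals the number of injections fixed by $\sigma$, which is the number of ordered $|W|$-tuples of fixed points of $\sigma$. Writing $X_i(\sigma) \defeq \#\{i\text{-cycles of }\sigma\}$, the character of $M(W)_{\mathbf{n}}$ is the falling factorial $X_1(X_1-1)\cdots(X_1-|W|+1)$, which is a polynomial in $X_1$ and hence a \emph{character polynomial} in the $X_i$. Specializing $\sigma = \id$ gives $n(n-1)\cdots(n-|W|+1)$, polynomial in $n$, which already yields the dimension statement in this base case.

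Second, by the Yoneda lemma any $\FI$-module generated in degrees $\le d$ admits a surjection $P_\bullet \twoheadrightarrow V_\bullet$ from a finite direct sum of representables $M(W)$ with $|W| \le d$. The kernel $K_\bullet$ is again finitely generated (this is where the Noetherian property of $\FI$-modules in characteristic zero enters). Characters are additive on short exact sequences of $\Sigma_n$-representations, so for each $n$ we have $\chi_{V_{\mathbf{n}}} = \chi_{P_{\mathbf{n}}} - \chi_{K_{\mathbf{n}}}$. I would then run induction on an invariant that controls the character polynomial degree, for example the ``weight'' of $V_\bullet$ or the maximum $\lambda_1$ appearing in any constituent $V(\lambda)_{\mathbf{n}}$, and show that this invariant is bounded on $K_\bullet$ by the generating degree of $P_\bullet$. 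The conclusion that $\chi_{V_{\mathbf{n}}}$ is eventually a fixed character polynomial follows, and evaluating at $\sigma = \id$ (so $X_1 = n$ and $X_i = 0$ for $i > 1$) gives the polynomial growth of $\dim V_{\mathbf{n}}$.

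The main obstacle is twofold. First, one must establish that $\FI$-modules over a field of characteristic zero are Noetherian, so that $K_\bullet$ remains finitely generated; this is itself a substantial theorem. Second, one must pin down an invariant that \emph{strictly decreases} when passing to $K_\bullet$ and simultaneously bounds the degree of the eventual character polynomial. A naive induction on generating degree fails, because the kernel of a surjection from $M(W)$'s can require generators in arbitrarily high degree. Resolving this requires either Church--Ellenberg--Farb's weight/stability-degree machinery together with induced modules $M(\lambda)$ whose characters are manifestly character polynomials, or passage to the richer category of $\FI\#$-modules where such decompositions become automatic.
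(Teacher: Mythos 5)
This statement is quoted from Church--Ellenberg--Farb and is not proved in this paper, so there is no in-paper argument to compare against; I will assess the proposal on its own merits.

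Your base-case computation is correct: the trace of $\sigma$ on $M(W)_{\mathbf{n}} = \Rat[\Hom_{\FI}(W,\mathbf{n})]$ counts injections $W \hookrightarrow \mathrm{Fix}(\sigma)$, giving the falling factorial $X_1(X_1-1)\cdots(X_1-|W|+1)$, which is indeed a character polynomial. The reduction to a surjection $P_\bullet \twoheadrightarrow V_\bullet$ from a finite sum of representables, with $K_\bullet = \ker$ again finitely generated by Noetherianity of $\FI$-modules in characteristic zero, is also the right first move. However, as you yourself flag, the proposal does not actually contain a proof: the inductive step is left as an unresolved ``find an invariant that strictly decreases,'' and the additivity of characters across the short exact sequence only transfers the problem from $V_\bullet$ to $K_\bullet$ without any a priori reason the process terminates. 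Naming the weight/stability-degree machinery or $\FI\#$ as what ``would be needed'' is an honest description of where the real work lies, but it is not a substitute for carrying it out; the hard content of the theorem lives precisely in that gap.

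A route that closes more cleanly, and that fits the logical structure this paper sets up, is to first invoke the equivalence quoted immediately before the statement (finitely generated $\Leftrightarrow$ representation stable, over a field of characteristic zero). Representation stability gives that the multiplicities $c_{\lambda}$ in $V_{\mathbf{k}} \cong \bigoplus_\lambda c_{\lambda,k} V(\lambda)_{k}$ are independent of $k$ for $k$ large, with only finitely many $\lambda$ appearing. One then uses the classical fact (going back to Murnaghan and Specht) that for each fixed $\lambda$ the character $\chi_{V(\lambda)_k}$ is, for $k \ge |\lambda| + \lambda_1$, given by a single character polynomial in the cycle-count functions $X_1, X_2, \ldots$. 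Finite additivity over the stable decomposition then yields a fixed character polynomial giving $\chi_{V_{\mathbf{k}}}$ for all large $k$, and evaluating at $\sigma = \id$ (where $X_1 = k$, $X_i = 0$ for $i \ge 2$) gives the dimension statement. This sidesteps the short-exact-sequence induction entirely, at the cost of taking the representation stability equivalence and the Murnaghan--Specht character polynomials as inputs; your approach would instead have to re-derive the stabilization of multiplicities as part of the induction, which is exactly where it stalls.
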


  In order to describe stabilization in multiple directions we look at the product category
  $\FI^{\times\ell}$ consisting of $\ell\ge1$ copies of the category $\FI$.
  An $\FI^{\times\ell}$-module is then a functor $\FI^{\times\ell}\to \mathrm{RMod}$, the notion of
  finite generation is defined analogously.

  To define such a module, it is sufficient to define it on the skeleton consisting of the objects
  $(\LST{j_1}, \ldots, \LST{j_\ell})$ for $j_i\in\Nat$ and the morphisms between them.
  In the introduction we defined $\mathbf{G}_{\underline{\Gamma}}$ for those objects.
  To define the images of morphisms, notice that each summand of $G_i$ can be labeled by a number
  between 1 and $j_i$.
  For a map $\phi\colon\LST{j_i}\hookrightarrow\LST{j'_i}$ we define the induced map to send the
  summand with label $m\in\LST{j_i}$ to the summand with label $\phi(m)$ via the identity.

  Clearly, if $V$ is a finitely generated $\FI^{\times\ell}$-module and
  $F\colon\FI\to\FI^{\times\ell}$ is any non-decreasing functor, then the $\FI$-module $F^*V\defeq
  V\circ F$ is finitely generated:
  since $F$ is non-decreasing, each component of $F$ is either eventually constant or unbounded.

  \section{Generators for the homology of configuration spaces of graphs}
  In this section, we give an overview over the results of \cite{CheLue16} that we need in this paper.

  \subsection{Configurations in trees with loops}
  We need the following definitions from the mentioned paper.
  \begin{definition}\label{def:product-of-cycles}
    A homology class $\sigma\in H_q(\Conf_n(G))$ is called the \emph{product of classes $\sigma_1\in
      H_{q_1}(\Conf_{T_1}(G_1))$ and $\sigma_2\in H_{q_2}(\Conf_{T_2}(G_2))$} if it is the image of
      $\sigma_1\otimes \sigma_2$ under the map
      \[
        H_q(\Conf_n(G_1\sqcup G_2)) \to H_q(\Conf_n(G))
      \]
      induced by an embedding $G_1\sqcup G_2\hookrightarrow G$.
      Analogously, we define iterated products.
    \end{definition}
    \begin{definition}\label{def:basic-classes}
      For $k\ge 3$ let $\Star_k$ be the star graph with $k$ leaves, let $\HH$ be the tree with two
      vertices of valence three and let $S^1$ be the circle with one vertex of valence 2.
      We call a class $\sigma\in H_1\left( \Conf_n(G) \right)$ \emph{basic} if there exists a piecewise
      linear embedding $\iota$ of $\HH$, $S^1$ or $\Star_k$ for some $k$ into $G$ such that $\sigma$ is
      in the image of the induced map $H_1(\Conf_n(\iota))$.
    \end{definition}

    We will use the following result:
    \begin{thm}[{\cite[Theorem D, p. 3]{CheLue16}}]\label{generators}
      Let $G$ be a finite graph.
      Then the first homology of $\Conf_n(G)$ is generated by basic classes.
      If $G$ is a tree with loops, then $H_*(\Conf_n(G))$ is free and
      generated by products of basic classes.
    \end{thm}
    \begin{rem}\label{rem:generators-homology-trees-h-graphs}
      We will also use that the embeddings of $\HH$ can be chosen such that they contain precisely two
      essential vertices, which can be arranged by splitting an $\HH$-graph
      containing $k$ such vertices into $k-1$ of them, each containing exactly
      two vertices.
      Also, after fixing those two vertices, we can choose the edges of the
      embedded $\HH$-graph arbitrarily: in the proof of the theorem above we
      only needed that the valence of the vertices is at least three.
      The cycles given by different choices of edges differ by cycles in the stars of the corresponding
      vertices.
      See \cite{CheLue16} for details.
    \end{rem}

  \subsection{The combinatorial model}\label{sec:the-combinatorial-model}
  In this paper, we will use the same combinatorial model of the configuration space of a graph as in \cite{CheLue16}.
  We only briefly sketch the construction, for more details see \cite{CheLue16} and
  \cite{Luetgehetmann14}.

  \begin{definition}[{Cube Complex, see \cite[Definition I.7.32]{bh10}}]
    A cube complex $K$ is the quotient of a disjoint union of cubes
    $X=\bigsqcup_{\lambda\in\Lambda}[0,1]^{k_\lambda}$ by an equivalence
    relation $\sim$ such that the quotient map $p\colon X\to X/\!\!\sim\
    = K$ maps each cube injectively into $K$ and we only identify faces
    of the same dimensions by an isometric homeomorphism.
  \end{definition}
  \begin{rem}
    The definition above differs slightly from the original definition
    by Bridson and Häfliger, in that it allows two cubes to be
    identified along more than one face.
  \end{rem}

  \begin{prop}[{\cite[Proposition 2.3, p. 4]{CheLue16}}]\label{prop:combinatorial-model}
    Let $G$ be a finite graph and $n\in\Nat$. Then $\Conf_{n}(G)$ deformation
    retracts to a finite cube complex of dimension $\min\{n, |V_{\ge2}| \}$,
    where $V_{\ge 2}$ is the set of vertices of $G$ of valence at least two.
  \end{prop}

  The basic idea of the combinatorial model is to keep all particles on
  any single edge equidistant at all times.  Moving one of the outmost
  particles from an edge to an empty essential vertex is then
  given by decreasing the distance of this particle from the vertex,
  while simultaneously increasing the distance between the particles on
  this edge.  Once the particle reaches the vertex, all remaining
  particles on the edge will be equidistant again.

  More formally, the 0-cubes of the combinatorial model are all those
  configurations where all particles in the interior of each edge cut
  the edge into pieces of equal length. A $k$-dimensional cube is given by
  choosing such a $0$-cell, $k$ distinct particles sitting on distinct
  vertices and for each of those particles an edge incident to the
  corresponding vertex.  The $i$-th dimension of the cube $[0,1]^k$ then
  corresponds to moving the $i$-th of those $k$ particles from their
  position on the vertex onto the edge, where at time zero the particle
  is on the vertex and at time 1 it is on the edge.  Remember that if
  there are already particles on the edge then they continuously squeeze
  together to make room for the new particle (it is also possible that
  two particles move onto the same edge from different sides).
  Such a choice of $k$ movements determines a $k$-cube if and only if we
  can realize the movements independently, namely if no two particles
  move towards the same vertex.  This describes the cube complex as a subspace
  of the configuration space.

  Each vertex can only be involved in one of those combinatorial movements at
  the same time, so the dimension of this cube complex is bounded above by the
  number of essential vertices.

  For an example, see Figure \ref{fig:combinatorial-model}.
  When we write $\Conf_n(X)$, we from now on mean the combinatorial model, unless
  stated otherwise.

  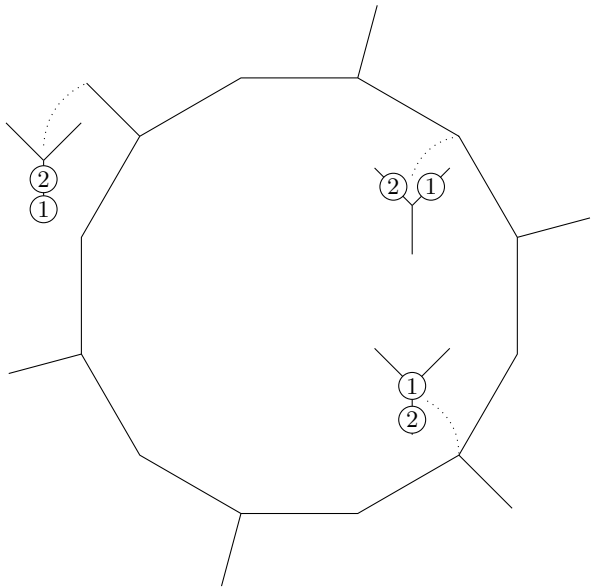
\begin{figure}[htpb]
    \centering
    \begin{tikzpicture}
      \newcommand{\Graph}{ \draw (0, 0) -- (0, -1.3); \draw (0, 0) --
        (1, 1); \draw (0, 0) -- (-1, 1); }

      \draw[-] ($ (0, 0) + (15: 3) $) -- ($ (0, 0) + (45: 3) $) -- ($
      (0, 0) + (75: 3) $) -- ($ (0, 0) + (105: 3) $) -- ($ (0, 0) +
      (135: 3) $) -- ($ (0, 0) + (165: 3) $) -- ($ (0, 0) + (195: 3)
      $) -- ($ (0, 0) + (225: 3) $) -- ($ (0, 0) + (255: 3) $) -- ($
      (0, 0) + (285: 3) $) -- ($ (0, 0) + (315: 3) $) -- ($ (0, 0) +
      (345: 3) $) -- cycle;

      \draw[-] ($ (0, 0) + (15: 3) $) -- ($ (0, 0) + (15: 4) $);
      \draw[-] ($ (0, 0) + (75: 3) $) -- ($ (0, 0) + (75: 4) $);
      \draw[-] ($ (0, 0) + (135: 3) $) -- ($ (0, 0) + (135: 4) $);
      \draw[-] ($ (0, 0) + (195: 3) $) -- ($ (0, 0) + (195: 4) $);
      \draw[-] ($ (0, 0) + (255: 3) $) -- ($ (0, 0) + (255: 4) $);
      \draw[-] ($ (0, 0) + (315: 3) $) -- ($ (0, 0) + (315: 4) $);

      \begin{scope}[shift={(1.5, -1.2)}, scale=0.5]
        \Graph
        \particleNr{(0, 0)}{1};
        \particleNr{(0, -0.9)}{2};
      \end{scope}

      \begin{scope}[shift={(1.5, 1.2)}, scale=0.5]
        \Graph
        \particleNr{(0.5, 0.5)}{1};
        \particleNr{(-0.5, 0.5)}{2};
      \end{scope}

      \begin{scope}[shift={(-3.4, 1.8)}, scale=0.5]
        \Graph
        \particleNr{(0, -1.3)}{1};
        \particleNr{(0, -.5)}{2};
      \end{scope}

      \draw[dotted] (-3.4, 2) to[bend left] ($ (0, 0) + (135: 4) $);
      \draw[dotted] (1.7, -1.4) to[bend left] ($ (0, 0) + (315: 3) $);
      \draw[dotted] (1.5, 1.6) to[bend left] ($ (0, 0) + (45: 3) $);
    \end{tikzpicture}

    \caption{The combinatorial model of $\Conf_2(Y)$. Each edge
      corresponds to the movement of a single particle from the
      essential vertex onto one of the three edges. Moving along the
      embedded circle the two particles move alternatingly onto the
      edge that is not occupied by the other particle.}
      \label{fig:combinatorial-model}
  \end{figure}

  \section{Proof of the main theorems}
  We first prove \autoref{cor:thm-for-wedge} for star graphs by hand.

  \begin{prop}\label{prop:thm-for-star}
    \autoref{cor:thm-for-wedge} is true for $G$ the point and $K$ the interval $[0,1]$ with 0 as base
    point.
  \end{prop}
  \begin{rem}\label{rem:stabilization-2n}
    For $n=2$ the argument presented below is easily modified to show that $H_1(\Conf_2(G_\bullet))$
    is generated in degree $n+2=4$.
    Since $n+3\le 2n$ for $n>2$, this shows that $H_1(\Conf_n(G_\bullet))$ is generated in degree
    $2n$, which will be used in the proof of \autoref{thm:tree-stabilization}.
  \end{rem}
  \begin{proof}[{Proof of \autoref{prop:thm-for-star}}]
    The combinatorial model of this configuration space is a graph (c.f.\ \cite[Theorem 2.6, p.
    8]{Ghrist01}, \cite{Luetgehetmann14}), so we only need to consider 1-cycles.
    Choose any subgraph $\Star_3 \subset \Star_k$.
    Let $C$ be a 1-cycle, then the claim is that we can write $C$ as a sum of cycles where
    each particle uses at most one edge outside of $\Star_3$.

    Let $x$ be a particle and choose a 0-cube $\nu$ of $C$ where $x$ sits on the vertex of the star.
    If this does not exist, then $x$ is fixed and therefore uses at most one
    edge and we have nothing to prove for this particle.
    Now move along a path $\gamma$ of 1-cubes in the cycle until $x$ sits on the vertex again and
    there exists a continuation such that the next 1-cube would move $x$ onto an edge of
    $\Star_k-\Star_3$ for the \emph{second time}.
    We denote the corresponding terminal 0-cube of $\gamma$ by $\nu'$.
    Now choose the following path $\gamma'$ back to $\nu$, during which $x$ always stays in $\Star_3$:
    move $x$ onto an edge $e_1$ of $\Star_3$ and keep it there.
    Follow $\gamma$ back ignoring the movement of $x$ and using the connectedness of the configuration
    space of $\Star_3$ to move $x$ out of the way if other particles need to move along $e_1$.
    Finally, move $x$ back to the vertex.

    This decomposes $C$ into two cycles:
    the cycle $\gamma\gamma'$ and $C$ with $\gamma$ replaced by $\gamma'^{-1}$.
    In the first of those two cycles the particle $x$ only visits one edge not in $\Star_3$.
    Continuing this process, we eventually exhaust all 1-cubes of $C$ and get a sum decomposition of $C$
    where in each summand $x$ visits only $\Star_3$ and at most one additional edge.

    Since we did not increase the number of edges outside of $\Star_3$ visited by any other particle,
    we can repeat this for every $x$ and get a sum decomposition of $C$ of the required form.

    Consequently, for each $N\ge n+3$ we can generate $H_1(\Conf_n(G_N))$ by cycles such that each one
    of them is supported in some subgraph $\Star_{n+3}\hookrightarrow G_N$.
    Therefore, the $\Int\Sigma_N$-span of the image of the map
    \[ H_1(\Conf_n(G_{n+3}))\to H_1(\Conf_n(G_N)) \]
    is the whole module and the $\FI$-module $H_1(\Conf_n(G_\bullet); \Int)$ is finitely generated in
    degree $n+3$.
  \end{proof}

  \begin{proof}[{Proof of \autoref{thm:tree-stabilization}}]
    Let $n>1$ and let $(k_1, \ldots, k_\ell)$ be such that each $k_i$ is at least $\zeta=\min\{2n,
    n+3q\}$.
    By \autoref{generators}, the homology of $\Conf_n(\mathbf{G}(k_1, \ldots, k_\ell))$ is generated
    by products of basic cycles.
    By \autoref{rem:generators-homology-trees-h-graphs}, we can assume that the embedded $\HH$-graphs
    contain exactly two vertices because $k_i > 3$ and therefore the valence of all internal vertices
    is at least three.
    In the following, we will say that a particle meets a copy of some $G_i$ if it moves into the part
    not contained in $G_0$, namely the part $G_i-K_i$.

    Each \textbf{$\mathbf{\HH}$-class} meets at most 3 copies of each $G_i$ by
    \autoref{rem:generators-homology-trees-h-graphs}.
    Since each $\HH$-class consists of $m\ge 2$ particles, we have $3\le \zeta_{m,1}$.

    Each \textbf{star class} with $m$ particles can be written as a linear combination of generators
    such that each is using only $\zeta_{m,1}$ different edges by \autoref{prop:thm-for-star} and
    \autoref{rem:stabilization-2n}.
    Therefore, each summand visits at most $\zeta_{m,1}$ distinct copies of each of the $G_i$.

    The only embedded copies of $S^1$ are given by self loops at one of the vertices, so each
    \textbf{$\mathbf{S^1}$-class} meets at most one of the copies of one of the $G_i$.

    Each of the non-moving particles meets at most one of the copies.
    Hence, we can generate the whole homology by classes which each meet at most
    \[
      \zeta_{m_1,1}+\cdots+ \zeta_{m_q,1}+(n-m_1-\cdots-m_q)\le \min\{2n, n+3q\}=\zeta_{n,q}
    \]
    different copies of each of the $G_i$.
    This implies that the $\Int[\Sigma_{k_1}\times\cdots\times\Sigma_{k_\ell}]$-span of the image of
    \[ H_q\left( \Conf_n(\mathbf{G}(\zeta,\ldots,\zeta)) \right) \to H_q\left(
    \Conf_n(\mathbf{G}(k_1,\ldots,k_\ell)) \right) \]
    is the whole module, finishing the proof.
  \end{proof}

  \begin{proof}[{Proof of \autoref{thm:first-homology-group}}]
    By \autoref{generators}, the homology group
    $\mathbf{H}^{\underline{\Gamma}}_{1,n}$ is generated by basic classes.
    Fix $(k_1, \ldots, k_\ell)$ such that $k_i\ge n+3$ for each $i$.
    Each $\HH$-class meets at most three copies of each $G_i$ by
    \autoref{rem:generators-homology-trees-h-graphs}.
    Star classes involving $k$ particles can be written as sums of other star classes, each meeting at
    most $k+3$ copies of each of the $G_i$ by \autoref{prop:thm-for-star}.
    Every $S^1$-class can be written as a sum of $S^1$-classes such that each of them meets at most
    two copies of each of the $G_i$:
    choose a spanning tree for each connected component of $\mathbf{G}(1,\ldots, 1)\subset
    \mathbf{G}(k_1, \ldots, k_\ell)$ and extend it to spanning trees for the connected components of
    $\mathbf{G}(k_1,\ldots, k_\ell)$.
    The inclusion $\mathbf{G}(1,\ldots,1)\hookrightarrow \mathbf{G}(k_1,\ldots, k_\ell)$ is a
    $\pi_0$-isomorphism, so this construction ensures that this forest restricted to the union of
    $\mathbf{G}(1,\ldots,1)$ and a copy of one of the $G_i$ still gives a spanning forest.
    The cycles corresponding to the edges outside of that spanning forest thus stay inside this copy
    and $\mathbf{G}(1,\ldots,1)$, so they meet at most two copies of each $G_i$.

    The non-moving particles meet at most one copy each, so each class can be written as a sum of
    classes meeting at most $n+3$ copies of each of the $G_i$.
  \end{proof}

  For the proof of \autoref{thm:wedge-stabilization} we need the following definition:

  \begin{definition}[{\cite[Definition 3.6, p. 41]{Luetgehetmann17}}]
    For finite sets $T \subset S$, a finite graph G and a subset $K\subset G$ define
    \[
      \Conf_{S, T}((G, K)) = \{f\colon S \hookrightarrow G \,|\, f(T)\subset K\}\subset \Conf_S(G).
    \]
  \end{definition}

  Additionally, we use the following result, whose proof we sketch here for the readers convenience.

  \begin{prop}[{\cite[Proposition 5.6, p. 8]{LueRM18}}]\label{prop:adding-particles}
    For each $n\ge m$ and each graph $G$ with at least one essential
    vertex there exists a map
    \[
    \Conf_m(G) \to \Conf_n(G)
    \]
    which composed with the forgetful map
    \[
    \Conf_n(G) \to \Conf_m(G)
    \]
    is homotopic to the identity.  In particular, we have that
    $H_*(\Conf_m(G))$ is a direct summand of $H_*(\Conf_n(G))$.
  \end{prop}

  \begin{proof}[{Sketch of proof for the case $n=m+1$}]
    We define the map between the combinatorial models as follows:
    Choose an essential vertex $v$ and three edges $e_1, e_2, e_3$ incident to $v$.

    For each $k$-cube in the combinatorial model of $\Conf_m(G)$ where no particle moves from $e_1$
    towards $v$ simply add particle $n=m+1$ onto $e_1$ between $v$ and all other particles on $e_1$.

    Given a cube where one particle $p$ moves from $e_1$ towards $v$ we
    consider the following sequence of movements: move particle $n$ via $v$ onto $e_2$, move $p$ via
    $v$ onto $e_3$, move $n$ back onto $e_1$ and finally move $p$ onto $v$. These movements are
    independent of the movements of the other particles in the chosen cube, so we can replace the
    movement of $p$ with this sequence.
    This defines a union of cells in the combinatorial model of $\Conf_n(G)$, and we define our map to
    stretch the cube we started with onto this strip of cells, see
    \autoref{fig:strip-of-cells}.  It is straightforward to check that
    this gives a continuous map with the desired properties.\qedhere

    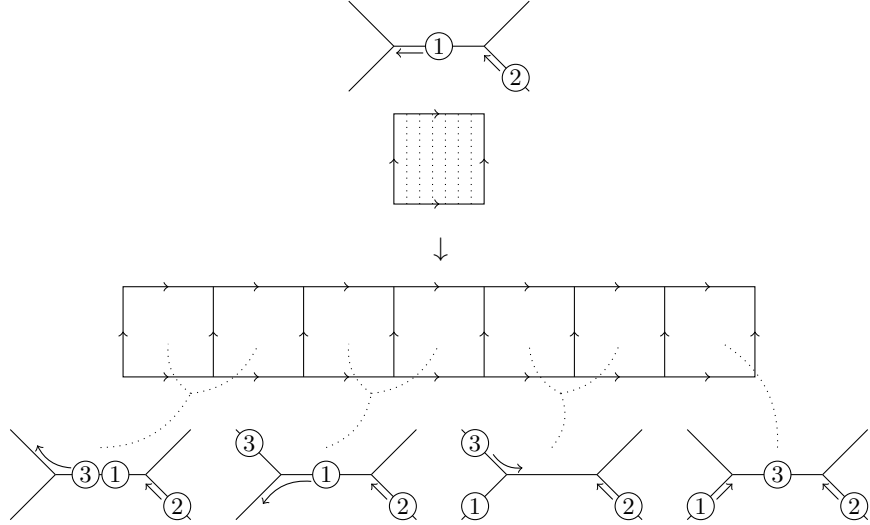
\begin{figure}[htpb]
      \centering
      \begin{tikzpicture}
        \newcommand{\Graph}{ \draw (-1, 0) -- (1, 0); \draw (-2, -1) --
          (-1, 0); \draw (-2, 1) -- (-1, 0); \draw (2, -1) -- (1, 0);
          \draw (2, 1) -- (1, 0); }

        \begin{scope}[shift={(0, 5)}, scale=0.6]
          \Graph
          \particleNr{(0, 0)}{1};
          \particleNr{(1.7, -0.7)}{2}; \draw[->] (-0.35, -0.15) to
          (-0.95, -0.15); \draw[->] (1.35, -0.55) to (1.0, -0.2);
        \end{scope}

        \begin{scope}[shift={(0, 3.5)}, scale=0.6]
          \draw (-1, -1) -- (1, -1) -- (1, 1) -- (-1, 1) -- cycle;
          \draw[->] (-.005, -1) -- (.005, -1); \draw[->] (-.005, 1) --
          (.005, 1); \draw[->] (-1, -.005) -- (-1, .005); \draw[->] (1,
          -.005) -- (1, .005); \draw[dotted] (-0.7143, -1) -- (-0.7143,
          1); \draw[dotted] (-0.4286, -1) -- (-0.4286, 1); \draw[dotted]
          (-0.1429, -1) -- (-0.1429, 1); \draw[dotted] (0.1428, -1) --
          (0.1428, 1); \draw[dotted] (0.4285, -1) -- (0.4285, 1);
          \draw[dotted] (0.7142, -1) -- (0.7142, 1);
        \end{scope}

        \node[rotate=-90] at (0, 2.3) {$\to$};

        \begin{scope}[shift={(-1.8, 1.2)}, scale=0.6]
          \draw (-4, -1) -- (10, -1) -- (10, 1) -- (-4, 1) -- cycle;

          \draw[->] (-3.005, -1) -- (-2.995, -1); \draw[->] (-3.005, 1)
          -- (-2.995, 1); \draw[->] (-1.005, -1) -- (-0.995, -1);
          \draw[->] (-1.005, 1) -- (-0.995, 1); \draw[<-] (1.005, -1) --
          (0.995, -1); \draw[<-] (1.005, 1) -- (0.995, 1); \draw[<-]
          (3.005, -1) -- (2.995, -1); \draw[<-] (3.005, 1) -- (2.995,
          1); \draw[<-] (5.005, -1) -- (4.995, -1); \draw[<-] (5.005, 1)
          -- (4.995, 1); \draw[<-] (7.005, -1) -- (6.995, -1); \draw[<-]
          (7.005, 1) -- (6.995, 1); \draw[<-] (9.005, -1) -- (8.995,
          -1); \draw[<-] (9.005, 1) -- (8.995, 1);

          \draw[->] (-2, -.005) -- (-2, .005); \draw[->] (-4, -.005) --
          (-4, .005); \draw[->] (2, -.005) -- (2, .005); \draw[->] (0,
          -.005) -- (0, .005); \draw[->] (4, -.005) -- (4, .005);
          \draw[->] (6, -.005) -- (6, .005); \draw[->] (8, -.005) -- (8,
          .005); \draw[->] (10, -.005) -- (10, .005); \draw (-2, -1) --
          (-2, 1); \draw (0, -1) -- (0, 1); \draw (2, -1) -- (2, 1);
          \draw (4, -1) -- (4, 1); \draw (6, -1) -- (6, 1); \draw (8,
          -1) -- (8, 1);
        \end{scope}

        \begin{scope}[shift={(0, -0.7)}, scale=0.6]
          \begin{scope}[shift={(-7.5, 0)}]
            \Graph
            \particleNr{(0.33, 0)}{1};
            \particleNr{(-0.33, 0)}{3};
            \particleNr{(1.7, -0.7)}{2}; \draw[->, bend left] (-0.65,
            0.15) to (-1.4, 0.65); \draw[->] (1.35, -0.55) to (1.0,
            -0.2);

            \draw[dotted, bend right] (0, 0.6) to (2, 1.8);
            \draw[dotted, bend left] (2, 1.8) to (1.5, 2.9);
            \draw[dotted, bend right] (2, 1.8) to (3.5, 2.9);
          \end{scope}
          \begin{scope}[shift={(-2.5, 0)}]
            \Graph
            \particleNr{(0, 0)}{1};
            \particleNr{(-1.7, 0.7)}{3};
            \particleNr{(1.7, -0.7)}{2}; \draw[->, bend right] (-0.33,
            -0.15) to (-1.4, -0.65); \draw[->] (1.35, -0.55) to (1.0,
            -0.2);

            \draw[dotted, bend right] (0, 0.6) to (1, 1.8);
            \draw[dotted, bend left] (1, 1.8) to (0.5, 2.9);
            \draw[dotted, bend right] (1, 1.8) to (2.5, 2.9);
          \end{scope}
          \begin{scope}[shift={(2.5, 0)}]
            \Graph
            \particleNr{(-1.7, -0.7)}{1};
            \particleNr{(-1.7, 0.7)}{3};
            \particleNr{(1.7, -0.7)}{2}; \draw[<-, bend left] (-0.65,
            0.15) to (-1.3, 0.45); \draw[->] (1.35, -0.55) to (1.0,
            -0.2);

            \draw[dotted, bend right] (0, 0.6) to (0.2, 1.8);
            \draw[dotted, bend left] (0.2, 1.8) to (-0.5, 2.9);
            \draw[dotted, bend right] (0.2, 1.8) to (1.5, 2.9);
          \end{scope}
          \begin{scope}[shift={(7.5, 0)}]
            \Graph
            \particleNr{(-1.7, -0.7)}{1};
            \particleNr{(0, 0)}{3};
            \particleNr{(1.7, -0.7)}{2}; \draw[->] (-1.35, -0.55) to
            (-1.0, -0.2); \draw[->] (1.35, -0.55) to (1.0, -0.2);

            \draw[dotted, bend right] (0, 0.6) to (-1.2, 2.9);
          \end{scope}
        \end{scope}
      \end{tikzpicture}
      \caption{Replacing a 2-cell by a strip of 2-cells to construct a
        map $\Conf_2(G)\to \Conf_3(G)$. The vertical direction
        in the cubes corresponds to the movement of particle 2, the
        seven small rectangles above are stretched to the seven squares
        below.}
      \label{fig:strip-of-cells}
    \end{figure}
  \end{proof}

  \begin{proof}[{Proof of \autoref{thm:wedge-stabilization}}]
    From now on, we will assume that each $G_i$ has at least one edge, i.e.\ $G_i\neq K_i$, and we
    will write $G=\mathbf{G}(k_1, k_2, \ldots, k_{\ell})$ for some fixed numbers $k_1, \ldots,
    k_{\ell} \ge n+3$.
    Choosing three edges incident to $v$, the subgraph of $G$ consisting of $v$ and these three
    edges intersects at most three $G_i^r$ outside of $G_0\defeq \mathbf{G}(0, \ldots, 0)$ and is
    called the base graph of $G$ and is denoted by $G_\mathrm{base}$.

    \vspace{1em}

    For this proof only we say that a cellular chain
    \[
      X\in C_q(\Conf_n(G)) = C^\mathrm{cell}_q(\Conf_n(G))
    \]
    representing a homology class is bounded of degree $0\le m\le n$ if it is in the image of the map
    \[
      C_q\left( \Conf_{\LST{n}, \LST{m} }( G, L_m )\right) \to C_q\left(\Conf_n(G)\right),
    \]
    induced by the inclusion, where $L_m$ is some subgraph of $G$ given by the union of
    $G_\mathrm{base}$ and $m$ different $G^r_i$.
    A homology class in $H_q(\Conf_n(G))$ is bounded of degree $m$ if it can be represented by a
    cellular chain with this property, and the homology group $H_q(\Conf_n(G))$ itself is said to be
    bounded of degree $m$ if it is generated by all classes bounded of this degree.

    We will proceed by proving that $H_q(\Conf_n(G))$ is bounded of degrees $m=0, \ldots, n$ via
    induction on $m$.
    For the base case $m=0$ there is nothing to prove, so let's assume that $m>0$ and that
    $H_q(\Conf_n(G))$ is bounded of degree $m - 1$.
    It now suffices to show that each class bounded of degree $m - 1$ can be written as a sum of
    classes which are bounded of degree $m$.

    Let $X$ be a $q$-dimensional cellular chain representing such a class, then by definition the
    particles $1, 2, \ldots, m - 1$ meet only a graph $L_{m-1}$ given by the union of the base graph
    and $m-1$ different $G^r_i$.
    Let particle $m$ be on some subgraph $G^{r_0}_{i_0} - \{v\}$ of $G-L_{m - 1}$ for some cell of
    $X$, and define $L_m\defeq L_{m - 1} \cup G^{r_0}_{i_0}$.
    If such a cell would not exist, then $X$ itself is bounded of degree $m$ and there would be
    nothing to show.

    Otherwise, consider $X$ as a cellular chain of $\Conf_{\LST{n}, \LST{m-1}}(G, L_{m -
    1})$.
    Let $X_{L_m}$ be the part of the linear combination $X$ given by all cells where particle $m$
    does not leave $L_m$.
    The boundary $Y=\partial X_{L_m}$ has the particle $m$ fixed on the vertex $v$ and by
    $\partial Y=\partial^2 X_{L_m}=0$ we see that $Y$ represents a class in $H_{q-1}(\Conf_{\LST{n},
    \LST{m}}(G, L_{m-1}))$.
    As $X_{L_m}$ bounds $Y$, this is the trivial homology class, and we claim that there exists a
    cellular chain
    \[ X_{L_m}'\in C_q(\Conf_{\LST{n}, \LST{m}}(G, L_{m-1})) \subset
    C_q(\Conf_{\LST{n}, \LST{m-1}}(G, L_{m-1}))\]
    that bounds $Y$ without the particle $m$ meeting the complement of the base graph
    $G_\mathrm{base}$.

    The class $[Y]$ is trivial in $H_{q-1}(\Conf_{\LST{n}, \LST{m}}(G, L_{m-1}))$, so its
    image under the projection
    \[
      H_{q-1}\left(\Conf_{\LST{n}, \LST{m}}(G, L_{m-1})\right) \to
      H_{q-1}\left(\Conf_{\LST{n} - \{m\}, \LST{m - 1}}(G, L_{m - 1})\right)
    \]
    is also trivial.
    Choosing an explicit bounding chain for the image of $Y$ under the corresponding map of cellular
    chain complexes, we can use \autoref{prop:adding-particles} to put the particle $m$ onto an
    edge $e$ of $G_\mathrm{base}$ incident to $v$, giving a chain in $C_q\left(\Conf_{\LST{n},
    \LST{m}}(G, L_{m - 1})\right)$ bounding a chain $\tilde{Y}$ that is
    almost equal to $Y$: the particle $m$ is fixed on the edge $e$ instead of
    $v$ (in $Y$ no particle moves towards $v$).
    Therefore, $\tilde{Y}$ is homologous in $\Conf_{\LST{n}, \LST{m}}(G, L_{m-1})$ to $Y$, giving
    the required chain $X_{L_m}'$.

    Now $X_{L_m} - X_{L_m}'$ is a chain where $m$ does not leave $L_m$, so it represents a homology
    class that is bounded of degree $m$, and in $X-(X_{L_m}-X_{L_m}')$ (the rest of $X$) the
    particle $m$ does not meet $G^{r_0}_{i_0}-\{v\}$.
    Repeating this procedure eventually leads to a decomposition of $X$ into chains that are bounded
    of degree $m$.

    \vspace{1em}

    By induction, the homology is generated by classes that are bounded of degree $n$, and since
    those classes meet at most $n+3$ different $G^r_i$ (or $n+1$ if $G_\mathrm{base}$ can be choosen to be contained
    in a single $G^r_i$), this concludes the proof.
  \end{proof}

\bibliographystyle{alpha}
\bibliography{conf-graph-stability}

\begin{thebibliography}{LRM18}

\bibitem[BH99]{bh10}
Martin~R.\! Bridson and Andr{\'e} H{\"a}fliger.
\newblock {\em Metric Spaces of Non-Positive Curvature}.
\newblock Springer, 1999.

\bibitem[CEF15]{CEF15}
Thomas Church, Jordan~S. Ellenberg, and Benson Farb.
\newblock F{I}-modules and stability for representations of symmetric groups.
\newblock {\em Duke Math. J.}, 164(9):1833--1910, 2015.

\bibitem[CF13]{CF13}
Thomas Church and Benson Farb.
\newblock Representation theory and homological stability.
\newblock {\em Adv. Math.}, 245:250--314, 2013.

\bibitem[CL16]{CheLue16}
Safia Chettih and Daniel Lütgehetmann.
\newblock {C}onfiguration {S}paces of {G}raphs.
\newblock {\em arXiv}, math.AT, December 2016.

\bibitem[FH91]{FH91}
W.~Fulton and J.~Harris.
\newblock {\em Representation Theory: A First Course}.
\newblock Graduate Texts in Mathematics / Readings in Mathematics. Springer New
  York, 1991.

\bibitem[Ghr01]{Ghrist01}
Robert Ghrist.
\newblock Configuration spaces and braid groups on graphs in robotics.
\newblock In {\em Knots, braids, and mapping class groups---papers dedicated to
  {J}oan {S}. {B}irman ({N}ew {Y}ork, 1998)}, volume~24 of {\em AMS/IP Stud.
  Adv. Math.}, pages 29--40. Amer. Math. Soc., Providence, RI, 2001.

\bibitem[LRM18]{LueRM18}
Daniel Lütgehetmann and David Recio-Mitter.
\newblock Topological complexity of configuration spaces of fully articulated
  graphs and banana graphs, 2018.

\bibitem[Lü14]{Luetgehetmann14}
Daniel Lütgehetmann.
\newblock Configuration spaces of graphs.
\newblock Master's thesis, Freie Universität Berlin, 2014.

\bibitem[Lü17]{Luetgehetmann17}
Daniel Lütgehetmann.
\newblock {\em Representation Stability for Configuration Spaces of Graphs}.
\newblock PhD thesis, Freie Universität Berlin, 2017.

\bibitem[RW17]{RaWh17}
Eric Ramos and Graham White.
\newblock Families of nested graphs with compatible symmetric-group actions,
  2017.

\end{thebibliography}
\end{document}